%
%
%
%

\documentclass[12pt]{amsart}

\usepackage{amsfonts,amsmath,amssymb,amsthm}
\usepackage[latin1]{inputenc}


\newcommand{\Qq}{\mathbb{Q}}

\renewcommand{\epsilon}{\varepsilon}
\renewcommand{\le}{\leqslant}
\renewcommand{\ge}{\geqslant}

\newcommand{\defi}[1]{\emph{#1}}
\DeclareMathOperator{\coeff}{coeff}
\DeclareMathOperator{\charac}{char}

{\theoremstyle{plain}
\newtheorem{theorem}{Theorem}            
      
\newtheorem{proposition}[theorem]{Proposition}  
\newtheorem{corollary}[theorem]{Corollary}  
\newtheorem{algorithm}[theorem]{Algorithm} 
}
{\theoremstyle{remark}
\newtheorem*{remark*}{Remark}  
\newtheorem*{example*}{Example}
}

\begin{document}

\title{Decomposition of polynomials and approximate roots}

\author{Arnaud Bodin}
\address{Laboratoire Paul Painlev\'e, Math\'ematiques,
Universit\'e de Lille 1,  59655 Villeneuve d'Ascq, France.}
\email{Arnaud.Bodin@math.univ-lille1.fr}
\subjclass[2000]{13B25}
\keywords{Decomposable and indecomposable polynomials in one or several variables}
 
\begin{abstract}
We state a kind of Euclidian division theorem:
given a polynomial $P(x)$ and  a divisor $d$ of the degree of $P$,
there exist polynomials $h(x),Q(x),R(x)$ such that
$P(x) = h\circ Q(x) +R(x)$, with $\deg h=d$. Under some conditions $h,Q,R$ are unique, 
and $Q$ is the approximate $d$-root of $P$.
Moreover we give an algorithm to compute such a decomposition.
We apply these results to decide whether a polynomial in one or several variables
is decomposable or not.
\end{abstract}

\maketitle


\section{Introduction} 
\label{sec:intro}

Let  $A$ be an integral domain (i.e.~a unitary commutative ring without zero divisors).
Our main result is:
\begin{theorem}
\label{th:decomp}
Let $P \in A[x]$ be a monic polynomial.
Let $d \ge 2$ such that $d$ is a divisor of $\deg P$ and $d$ is invertible in $A$. 
There exist $h, Q, R \in A[x]$ such that
$$P(x) = h\circ Q(x) + R(x)$$
with the conditions that
\begin{enumerate}
  \item\label{it:i}     $h, Q$ are monic;
  \item\label{it:ii}   $\deg h = d$, $\coeff(h,x^{d-1})=0$, $\deg R < \deg P - \frac{\deg P}{d}$;
  \item\label{it:iii} $R(x) = \sum_{i} r_i x^i$ with $(\deg Q | i \Rightarrow r_i =0)$. 
\end{enumerate}
Moreover such $h, Q, R$ are unique.
\end{theorem}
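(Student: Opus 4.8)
The plan is to set $n = \deg P$ and $m = n/d$, which will be $\deg Q$, and to produce the decomposition in two stages: first pin down $Q$, then $h$ (and hence $R$). Throughout I write $Q = x^m + \sum_{i=0}^{m-1} q_i x^i$ and $h(y) = y^d + \sum_{j=0}^{d-2} a_j y^j$, so that the requirement $\coeff(h,x^{d-1})=0$ in \eqref{it:ii} is built into the shape of $h$ from the outset. The two stages each amount to solving a triangular linear system over $A$.

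First stage: I would show there is a unique monic $Q$ of degree $m$ with $\deg(P - Q^d) < n - m = (d-1)m$, i.e.\ the approximate $d$-th root of $P$. Expanding $Q^d$, the coefficient of $x^{dm-k}$ equals $d\,q_{m-k}$ plus a polynomial in the already determined $q_{m-1},\dots,q_{m-k+1}$. Matching these against the corresponding coefficients of $P$ for $k=1,\dots,m$ yields a triangular system in the unknowns $q_{m-1},\dots,q_0$ whose diagonal entries all equal $d$. Since $d$ is invertible in $A$, the system has a unique solution in $A$, which determines $Q$ and forces $\deg(P-Q^d)<(d-1)m$.

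Second stage: with $Q$ fixed, put $f = P - Q^d$, of degree $< (d-1)m$, and seek constants $a_0,\dots,a_{d-2}\in A$ so that $R := f - \sum_{j=0}^{d-2} a_j Q^j$ has vanishing coefficient at every $x^{km}$. Since $f$ and each $Q^j$ have degree $< (d-1)m$, the bound $\deg R < n - m$ of \eqref{it:ii} holds automatically, and the only multiples of $m$ in that range are $km$ for $k = 0,1,\dots,d-2$, giving exactly $d-1$ linear conditions in the $d-1$ unknowns $a_j$. The coefficient matrix $M_{kj} = \coeff(Q^j, x^{km})$ is triangular with $M_{jj}=1$ (because $Q^j$ is monic of degree $jm$) and $M_{kj}=0$ for $k>j$; hence it is invertible over $A$, the $a_j$ are uniquely obtained by back-substitution, and this yields $h$, and then $R$, satisfying \eqref{it:i}--\eqref{it:iii}.

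For uniqueness, suppose $P = h_i\circ Q_i + R_i$ for $i=1,2$, both meeting \eqref{it:i}--\eqref{it:iii}. Comparing leading terms forces $\deg Q_i = m$; and since $\deg R_i < (d-1)m$ while $\sum_{j\le d-2} a^{(i)}_j Q_i^j$ has degree $\le (d-2)m$, each $Q_i$ satisfies $\deg(P - Q_i^d) < (d-1)m$, so the first stage gives $Q_1 = Q_2$. With a common $Q$, the second stage's system has a unique solution, whence $h_1=h_2$ and $R_1=R_2$. The main obstacle is the first stage: establishing the approximate root is precisely where invertibility of $d$ is indispensable, as $d$ is the diagonal of the relevant triangular system; by contrast the second stage has unit diagonal and is comparatively routine.
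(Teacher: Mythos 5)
Your proof is correct, and while the first stage is essentially the paper's Proposition~\ref{prop:root} verbatim (the same triangular system with all diagonal entries equal to $d$, which is where invertibility of $d$ enters), your second stage and your uniqueness argument take a genuinely different route. The paper proves existence by a greedy algorithm (Algorithm~\ref{algo}): after computing $Q=\sqrt[d]{P}$ it repeatedly strips the highest monomial $a_ix^i$ of $P-h_k(Q)-R_k$, routing it into $h$ when $\deg Q\mid i$ and into $R$ otherwise, with termination by descent of degree; and it proves uniqueness of $h,R$ separately, by contradiction on the highest monomial of $R-R'$, whose degree would have to be a multiple of $\deg Q$ since $(h'-h)\circ Q=R-R'$, contradicting condition~(\ref{it:iii}). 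You instead encode condition~(\ref{it:iii}) as a square linear system in the $d-1$ unknown coefficients $a_0,\dots,a_{d-2}$ of $h$, with matrix $M_{kj}=\coeff(Q^j,x^{km})$, which is unitriangular because each $Q^j$ is monic of degree $jm$; its invertibility over any commutative ring gives existence and uniqueness of $h$ (hence of $R$) in one stroke. The counting here is the crux and you get it right: the degree bound $\deg R<(d-1)m$ automatically kills the coefficients at $x^{km}$ for $k\ge d-1$, so (\ref{it:iii}) imposes exactly $d-1$ conditions, matching the $d-1$ unknowns; and your reduction of uniqueness of $Q$ to the approximate root, via $\deg\bigl(\sum_{j\le d-2}a_j^{(i)}Q_i^j\bigr)\le(d-2)m$, is exactly the observation the paper uses to derive Corollary~\ref{cor:root}. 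What each approach buys: the paper's greedy procedure is the algorithm it later exploits for the decomposability tests, whereas your linear-algebra packaging makes uniqueness immediate (no separate contradiction argument), isolates precisely where $d\in A^\times$ is needed (only the approximate-root stage; your second stage has unit diagonal), and in fact never uses that $A$ is a domain, so it proves the theorem over an arbitrary commutative unital ring with $d$ invertible --- a mild strengthening of the stated hypotheses; it is also just as effective computationally, since a unitriangular system is solved by back-substitution.
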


The previous theorem has a formulation similar to the Euclidian division; but here $Q$ 
is not given (only its degree is fixed); there is a natural $Q$ (that we will compute, see 
Corollary~\ref{cor:root}) associated to $P$ and $d$.
Notice also that the decomposition $P(x) = h\circ Q(x) + R(x)$ is \emph{not} the $Q$-adic decomposition,
since the coefficients before the powers $Q^i(x)$ belong to $A$ and not to $A[x]$.

\begin{example*}
Let $P(x)= x^6+6x^5+6x+1 \in \Qq[x]$.
If $d=6$ we find the following decomposition $P(x)= h\circ Q(x)+R(x)$ with $h(x)= x^6-15x^4+40x^3-45x^2+30x-10$, $Q(x) = x+1$
 and $R(x)=0$.
If $d=3$ we have $h(x)= x^3+65$, $Q(x)=x^2+2x-4$  and $R(x)= 40x^3 - 90x$.
If $d=2$ we get $h(x) = x^2 -\frac{725}{4}$, $Q(x) = x^3+3x^2-\frac{9}{2}x+\frac{27}{2}$
 and $R(x) = -\frac {405}{4}x^2+\frac{255}{2}x$.
\end{example*}
Theorem \ref{th:decomp} will be of special interest when then ring $A$ is itself a polynomial ring. For 
instance at the end of the paper we give an example of a decomposition of a polynomial in two variables 
$P(x,y) \in A[x]$ for $A = K[y]$.

\bigskip

The polynomial $Q$ that appears in the decomposition
has  already been introduced in a rather different context.
We denote by $\sqrt[d]{P}$ the approximate $d$-root of $P$.
It is the polynomial such that $(\sqrt[d]{P})^d$ approximate
$P$ in a best way, that is to say $P-(\sqrt[d]{P})^d$ has smallest possible degree. 
The precise definition will be given in section~\ref{sec:unicity}, but we already notice 
the following:
\begin{corollary}
\label{cor:root}
$$Q = \sqrt[d]{P}$$
\end{corollary}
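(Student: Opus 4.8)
The plan is to verify that the polynomial $Q$ delivered by Theorem~\ref{th:decomp} satisfies the characterizing property of the approximate $d$-root, and then to invoke the uniqueness of that root. Write $n = \deg P$; since $d \mid n$ and $Q$ is monic of degree $n/d$, the approximate $d$-root $\sqrt[d]{P}$ is (by its definition in section~\ref{sec:unicity}) the unique monic polynomial of degree $n/d$ for which
$$\deg\bigl(P - (\sqrt[d]{P})^d\bigr) < n - \frac{n}{d}.$$
So it suffices to prove the single degree inequality $\deg(P - Q^d) < n - n/d$ for the $Q$ of the theorem, after which uniqueness forces $Q = \sqrt[d]{P}$.

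To establish this inequality I would first expand the composition. Since $\deg h = d$ and $\coeff(h,x^{d-1}) = 0$, we may write $h(x) = x^d + \sum_{i=0}^{d-2} a_i x^i$, and therefore
$$h\circ Q(x) - Q(x)^d = \sum_{i=0}^{d-2} a_i\, Q(x)^i.$$
Each term on the right has degree at most $(d-2)\cdot \frac{n}{d} = n - \frac{2n}{d}$, which is strictly less than $n - \frac{n}{d}$ since $n/d > 0$. Combining this with the bound $\deg R < n - n/d$ from condition~\ref{it:ii}, the relation $P - Q^d = (h\circ Q - Q^d) + R$ yields $\deg(P - Q^d) < n - n/d$, as required.

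The conceptual heart of the argument is the hypothesis $\coeff(h,x^{d-1}) = 0$: had $h$ retained its $x^{d-1}$ term, the composition would contribute a summand $a_{d-1} Q^{d-1}$ of degree exactly $(d-1)\cdot \frac{n}{d} = n - \frac{n}{d}$, which would fail the strict inequality and break the identification with the approximate root. The only real subtlety is therefore not in this computation but in the input it relies on, namely the existence and uniqueness of the approximate $d$-root together with the sharp degree bound $n - n/d$ in its definition; I expect the substantive work to lie in section~\ref{sec:unicity}, where that characterization is set up, while the corollary itself then follows from the two elementary degree estimates above together with the uniqueness clause of Theorem~\ref{th:decomp}.
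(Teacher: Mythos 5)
Your proof is correct and follows essentially the same route as the paper: deduce $\deg(P-Q^d) < \deg P - \frac{\deg P}{d}$ from the conditions of Theorem~\ref{th:decomp} and then invoke the uniqueness in Proposition~\ref{prop:root}. Your expansion $h\circ Q - Q^d = \sum_{i=0}^{d-2} a_i Q^i$ merely makes explicit the computation the paper leaves implicit when it says the bound follows from condition~(\ref{it:ii}), and your emphasis on the role of $\coeff(h,x^{d-1})=0$ is exactly right.
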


\bigskip

We apply these results to another situation.
Let $A = K$ be a field and $d\ge 2$. $P \in K[x]$ is said to be \defi{$d$-decomposable} in $K[x]$ if there exist
$h,Q \in K[x]$, with $\deg h = d$ such that
$$P(x) = h \circ Q(x).$$

\begin{corollary}
\label{cor:decomp}
Let $A=K$ be a field. Suppose that $\charac K$ does not divide $d$.
$P$ is $d$-decomposable in $K[x]$ if and only if $R = 0$ in the decomposition of Theorem~\ref{th:decomp}.
\end{corollary}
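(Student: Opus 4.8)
The plan is to prove the two implications separately, with essentially all of the content in the forward direction. The backward implication is immediate: if $R=0$, then Theorem~\ref{th:decomp} already gives $P = h\circ Q$ with $\deg h = d$, which is precisely the definition of $d$-decomposability. So I would dispose of that line first and concentrate on the converse.

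For the forward direction, suppose $P = g\circ S$ for some $g,S\in K[x]$ with $\deg g = d$; I must show that the unique triple furnished by Theorem~\ref{th:decomp} has remainder $R=0$. The idea is to normalize the given decomposition $g\circ S$ by an affine change of variable so that it satisfies conditions \eqref{it:i}--\eqref{it:iii} of the theorem with remainder $0$, and then let uniqueness do the rest. Note that since Theorem~\ref{th:decomp} is being invoked, $P$ is monic; comparing leading coefficients in $P=g\circ S$ gives $\mathrm{lc}(g)\,\mathrm{lc}(S)^{d}=1$, and taking degrees gives $\deg S=\deg P/d$ automatically.

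Concretely, for an affine map $\phi(t)=\alpha t+\beta$ one has $g\circ S=(g\circ\phi^{-1})\circ(\phi\circ S)$, so I set $Q:=\phi\circ S=\alpha S+\beta$ and $h:=g\circ\phi^{-1}$. Choosing $\alpha=1/\mathrm{lc}(S)$ makes $Q$ monic, and the relation $\mathrm{lc}(g)\,\mathrm{lc}(S)^{d}=1$ then forces $h$ to be monic as well, with $\deg h=d$; this gives condition \eqref{it:i}. It remains to kill the subleading coefficient of $h$: a direct expansion of $h(t)=g\bigl(\mathrm{lc}(S)(t-\beta)\bigr)$ shows that $\coeff(h,x^{d-1})$ is an affine function of $\beta$ whose top-degree part is $-d\beta$, so the equation $\coeff(h,x^{d-1})=0$ has a unique solution $\beta$ precisely because $d$ is invertible in $K$. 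This is exactly where the hypothesis $\charac K \nmid d$ enters. With these choices $P=h\circ Q$ is a decomposition with $h,Q$ monic, $\deg h=d$, $\coeff(h,x^{d-1})=0$ and $R=0$, and the choice $R=0$ vacuously satisfies \eqref{it:ii} and \eqref{it:iii}.

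Finally I would invoke the uniqueness clause of Theorem~\ref{th:decomp}: the normalized triple $(h,Q,0)$ satisfies all of \eqref{it:i}--\eqref{it:iii}, hence it coincides with the triple the theorem produces, so in particular that triple's remainder is $0$. The only substantive step is the normalization, and within it the one genuine obstacle is arranging that $\coeff(h,x^{d-1})$ can be made to vanish; this is exactly the point that requires $d$ to be invertible, and everything else is routine bookkeeping with leading coefficients.
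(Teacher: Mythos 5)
Your proof is correct and follows essentially the same route as the paper's own argument (given there for Proposition~\ref{prop:one}): normalize the given decomposition so that $h,Q$ are monic, use an affine change of variable --- possible exactly because $d$ is invertible in $K$ --- to force $\coeff(h,x^{d-1})=0$, and then conclude $R=0$ from the uniqueness clause of Theorem~\ref{th:decomp}. The only difference is that you spell out the scaling and the computation $\coeff(h,x^{d-1})=-d\beta+\text{const}$ explicitly, where the paper simply asserts the normalization is possible.
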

In particular, if $P$ is $d$-decomposable, then $P = h \circ Q$ with $Q = \sqrt[d]{P}$.

\bigskip

After the first version of this paper, M.~Ayad and G.~Ch\`eze communicated us some references
so that we can picture a part of history of the subject. Approximate roots appeared (for $d=2$) in some
work of E.D.~Rainville \cite{Ra} to find polynomial solutions of some Riccati type differential equations. 
An approximate root was seen as the polynomial part of the expansion of $P(x)^\frac{1}{d}$ into
decreasing powers of $x$. The use of approximate roots culminated with  S.S.~Abhyankar and T.T.~Moh
who proved the so-called Abhyankar-Moh-Suzuki theorem in \cite{AM1} and \cite{AM2}.
For the latest subject we refer the reader to an excellent expository article of P.~Popescu-Pampu  \cite{PP}.
On the other hand Ritt's decompostion theorems (see \cite{Sc} for example) have led to several
practical algorithms to decompose polynomials in one variable into the form $P(x)=h \circ Q(x)$: for example D.~Kozen and S.~Landau in \cite{KL} give an algorithm (refined in \cite{vzG}) that computes a decomposition in polynomial time.
Unification of both subjects starts with P.R.~Lazov and A.F.~Beardon (\cite{La}, \cite{Be}) for polynomials in one variable over complex numbers: they notice that the polynomial
$Q$ is in fact the approximate $d$-root of $P$.

\bigskip

We define approximate roots in section \ref{sec:unicity} and prove uniqueness of the decomposition of Theorem~\ref{th:decomp}.
Then in section \ref{sec:exists} we prove the existence of such decomposition and give an algorithm to compute it.
Finally in section \ref{sec:one} we apply these results to decomposable polynomials in one variable and in section \ref{sec:multi}  to decomposable polynomials in several variables.

\section{Approximate roots and proof of the uniqueness}
\label{sec:unicity}

The approximate roots of a polynomial are defined by the following property, \cite{AM1}, \cite[Proposition 3.1]{PP}.
\begin{proposition}
\label{prop:root}
Let $P \in A[x]$ a monic polynomial
and $d \ge 2$ such that $d$ is a divisor of $\deg P$ and $d$ is invertible in $A$. 
There exists a unique monic polynomial
$Q \in A[x]$ such that:
$$\deg (P-Q^d) < \deg P - \frac{\deg P}{d}.$$
\end{proposition}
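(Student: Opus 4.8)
The plan is to construct $Q$ by determining its coefficients one at a time, from the highest degree downward, and to show that at each stage exactly one choice is forced. Write $n = \deg P$ and $m = n/d$. Since $Q^d$ must be monic of degree $n$ to match $P$, the polynomial $Q$ is forced to be monic of degree $m$, so we seek
$$Q(x) = x^m + q_{m-1}x^{m-1} + \cdots + q_1 x + q_0,$$
with the $q_j \in A$ to be determined. The condition $\deg(P - Q^d) < n - m$ is equivalent to the vanishing of the coefficients of $x^{n}, x^{n-1}, \ldots, x^{n-m}$ in $P - Q^d$. The top one, the coefficient of $x^n$, cancels automatically since both $P$ and $Q^d$ are monic, so there remain exactly $m$ equations, indexed by $k = 1, \ldots, m$, for the $m$ unknowns $q_{m-1}, \ldots, q_0$.

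The key step is to exhibit the triangular shape of this system. I would write $Q = x^m(1 + u)$ with $u = \sum_{j=0}^{m-1} q_j x^{j-m}$ and expand by the binomial theorem:
$$Q^d = x^{n}\sum_{\ell=0}^{d} \binom{d}{\ell} u^\ell.$$
The linear term $\binom{d}{1}u = d\sum_j q_j x^{j-m}$ contributes $d\, q_{m-k}$ to the coefficient of $x^{n-k}$. For $\ell \ge 2$, a monomial $q_{j_1}\cdots q_{j_\ell}$ arising from $u^\ell$ sits in front of $x^{\,n - \sum_i (m - j_i)}$; for it to affect $x^{n-k}$ we need $\sum_i (m - j_i) = k$ with each summand $\ge 1$ and with $\ell \ge 2$ summands, forcing every $m - j_i \le k-1$, i.e.\ every index $j_i > m - k$. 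Hence the coefficient of $x^{n-k}$ in $Q^d$ equals $d\, q_{m-k}$ plus a polynomial expression involving only the higher coefficients $q_{m-1}, \ldots, q_{m-k+1}$.

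Consequently the $k$-th equation reads
$$d\, q_{m-k} + \Phi_k(q_{m-1}, \ldots, q_{m-k+1}) = \coeff(P, x^{n-k}),$$
for a universal polynomial $\Phi_k$ with coefficients in $A$ (and $\Phi_1 = 0$). Since $d$ is invertible in $A$, this determines
$$q_{m-k} = d^{-1}\bigl(\coeff(P,x^{n-k}) - \Phi_k(q_{m-1}, \ldots, q_{m-k+1})\bigr) \in A$$
uniquely in terms of the previously computed coefficients. Solving successively for $k = 1, 2, \ldots, m$ produces exactly one monic $Q \in A[x]$ satisfying all $m$ conditions, which is precisely the assertion $\deg(P - Q^d) < n - m$. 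This simultaneously yields existence and uniqueness.

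The main obstacle is the combinatorial bookkeeping in the second paragraph: confirming that the contributions of $u^\ell$ for $\ell \ge 2$ never reach down to the variable $q_{m-k}$ being solved for, so that the system is genuinely triangular with invertible diagonal entries $d$. Everything else is a routine recursion, and invertibility of $d$ is exactly what is needed in order to divide at each step.
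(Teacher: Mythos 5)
Your proof is correct and follows essentially the same route as the paper: both reduce the condition $\deg(P-Q^d)<n-\frac{n}{d}$ to a triangular system of $m=n/d$ equations in the unknown coefficients of $Q$, with diagonal entries $d$, solved inductively using the invertibility of $d$. Your substitution $Q=x^m(1+u)$ with the binomial expansion is just a slightly different bookkeeping device for the same multinomial computation that the paper writes out as the system $(\mathcal{S})$.
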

We call $Q$ the \defi{approximate $d$-root} of $P$
and denote it  by $\sqrt[d]{P}$.

Let us recall the proof from \cite{PP}.
\begin{proof}
Write $P(x) = x^n+a_1x^{n-1}+a_2x^{n-2}+\ldots+a_n$
and we search an equation for $Q(x)= x^{\frac{n}{d}}+b_1x^{\frac{n}{d}-1}+ b_2 x^{\frac{n}{d}-2}+\cdots+b_{\frac{n}{d}}$.
We want $\deg (P-Q^d) <  \deg P - \frac{\deg P}{d}$,
that is to say, the coefficients of $x^n,x^{n-1},\ldots,x^{n-\frac{n}{d}}$ in $P-Q^d$ equal zero.
By expanding $Q^d$ we get the following system of equations:
\begin{equation*}
\label{eq:sys}
\begin{cases}
  a_1 = d b_1 \\
  a_2 = d b_2+ \binom{d}{2} b_1^2 \\
  \vdots \\
  a_k = d b_k + {\hspace*{-1em} \displaystyle{\sum_{i_1+2i_2+\cdots+(k-1)i_{k-1}=k}\hspace*{-1em} c_{i_1\ldots i_{k-1}} b_1^{i_1}\cdots b_{k-1}^{i_{k-1}}}}, \qquad 1 \le k \le \frac{n}{d}
\end{cases}
\tag{$\mathcal{S}$}
\end{equation*}
where the coefficients $c_{i_1\ldots i_{k-1}}$ are the multinomial coefficients defined by the following formula:
$$c_{i_1\ldots i_{k-1}} = \binom{d}{i_1,\ldots,i_{k-1}} = \frac{d!}{i_1! \cdots i_{k-1}!(d-i_1-\cdots-i_{k-1})!}.$$
The system (\ref{eq:sys}) being a triangular system, we can inductively compute
the $b_i$ for $i=1,2,\ldots,\frac{n}{d}$:
$b_1 = \frac{a_1}{d}$, $b_2 = \frac{a_2-\binom{d}{2} b_1^2}{d}$, \ldots
Hence the system (\ref{eq:sys}) admits one and only one solution $b_1,b_2,\ldots,b_{\frac{n}{d}}$.

Notice that we need $d$ to be invertible in $A$ to compute $b_i$. 
Moreover $b_i$ depends only on the first coefficients
$a_1,a_2,\ldots,a_{\frac{n}{d}}$ of $P$.
\end{proof}

Proposition \ref{prop:root} enables us to prove Corollary  \ref{cor:root}:
by condition (\ref{it:ii}) of Theorem \ref{th:decomp}
we know that $\deg (P - Q^d) <   \deg P - \frac{\deg P}{d}$
so that $Q$ is the approximate $d$-root of $P$. 
Another way to compute $\sqrt[d]{P}$ is to use iterations of Tschirnhausen transformation,
see \cite{AM1} or \cite[Proposition 6.3]{PP}.
We end this section by proving uniqueness of the decomposition of Theorem \ref{th:decomp}.
\begin{proof}
$Q$ is the approximate $d$-root of $P$ so is unique (see Proposition \ref{prop:root} above).
In order to prove the uniqueness of $h$ and $R$, we argue by contradiction. 
Suppose  $h\circ Q+R=h' \circ Q +R'$ with $R\neq R'$;
set $r_ix^i$ to be the highest monomial of $R(x)-R'(x)$. From one hand
$x^i$ is a monomial of $R$ or $R'$, hence $\deg Q \nmid i$ by condition (\ref{it:iii}) of Theorem \ref{th:decomp}.
From the equality $(h'-h)\circ Q = R-R'$ we deduce that $i = \deg (R-R')$ is a multiple
of $\deg Q$ ; that yields a contradiction. Therefore $R=R'$, hence $h=h'$.
\end{proof}

\section{Algorithm and proof of the existence}
\label{sec:exists}

Here is an algorithm to compute the decomposition of Theorem \ref{th:decomp}.
\begin{algorithm}
\label{algo}\ 
\begin{itemize}
  \item \textbf{Input.} $P \in A[x]$, $d | \deg P$.
  \item \textbf{Output.} $h, Q, R \in A[x]$ such that $P = h \circ Q + R$.
  \item \textbf{1st step.} Compute $Q = \sqrt[d]{P}$ by solving the triangular system (\ref{eq:sys})
of Proposition \ref{prop:root}.
Set $h_1(x) = x^d$, $R_1(x)=0$.
  \item \textbf{2nd step.} Compute $P_2 = P - Q^d  = P - h_1(Q) - R_1$. Look for its highest monomial $a_ix^i$.
If $\deg Q | i$ then set $h_2(x) = h_1(x) + a_i x^{\frac{i}{\deg Q}}$, $R_2 = R_1$.
If $\deg Q \nmid i$ then $R_2(x) = R_1(x) + a_ix^i$, $h_2 = h_1$.
  \item \textbf{3thd step.}  Set $P_3 = P - h_2(Q)-R_2$, look for its highest monomial $a_ix^i$,\ldots
  \item \ldots
  \item \textbf{Final step.} $P_n = P - h_{n-1}(Q)-R_{n-1} = 0$ yields the decomposition
$P = h \circ Q + R$ with $h = h_{n-1}$ and $R=R_{n-1}$.
\end{itemize}
\end{algorithm}

The algorithm terminates because the degree of the $P_i$ decreases at each step.
It yields a decomposition $P = h\circ Q +R$ that verifies all the conditions
of Theorem \ref{th:decomp}: in the second step of the algorithm, and due to Proposition \ref{prop:root}
we know that $i < \deg P - \frac{\deg P}{d}$.
That implies $\coeff(h_2,x^{d-1})=0$ and $\deg R_2 < \deg P - \frac{\deg P}{d}$. 
Therefore at the end $\coeff(h,x^{d-1})=0$.
Of course the algorithm proves the existence of the decomposition in Theorem \ref{th:decomp}.

\section{Decomposable polynomials in one variable}
\label{sec:one}

Let $K$ be a field and $d\ge 2$. $P\in K[x]$ is said to be \emph{$d$-decomposable} in $K[x]$ if there exist
$h,Q \in K[x]$, with $\deg h = d$ such that
$$P(x) = h \circ Q(x).$$
We refer to \cite{BDN} for references and recent results on decomposable polynomials 
in one and several variables.

\begin{proposition}
\label{prop:one}
Let $A=K$ be a field whose characteristic does not divide $d$.
A monic polynomial $P$ is $d$-decomposable in $K[x]$ if and only if $R = 0$ in the decomposition $P = h \circ Q+R$.
\end{proposition}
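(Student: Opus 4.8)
The plan is to prove the two implications separately; the reverse direction is immediate and the forward direction carries all the content. If $R=0$, then $P = h\circ Q$ with $\deg h = d$ by condition~(\ref{it:ii}) of Theorem~\ref{th:decomp}, so $P$ is $d$-decomposable essentially by definition. The real work is the converse: assuming $P = g\circ S$ with $\deg g = d$, I must show that the canonical decomposition of Theorem~\ref{th:decomp} has $R=0$.

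For the converse my first task is to normalize the given decomposition so that it has the exact shape produced by Theorem~\ref{th:decomp}, and I would do this in two steps. First, I make both factors monic: if $s$ and $c$ denote the leading coefficients of $S$ and $g$, then $cs^d = 1$ because $P$ is monic, so replacing $S$ by the monic polynomial $s^{-1}S$ and $g(y)$ by $g(sy)$ leaves the composite unchanged while rendering both factors monic. Second, I apply a Tschirnhausen transformation to annihilate the subleading coefficient of the outer factor: writing the now-monic outer factor as $y^d + c_1 y^{d-1} + \cdots$, I replace $S$ by $\tilde S = S + c_1/d$ and $g$ by $\tilde h(y) = g(y - c_1/d)$. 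Here I use crucially that $\charac K$ does not divide $d$, so that $c_1/d \in K$. A short computation shows $\coeff(\tilde h, y^{d-1}) = 0$, while $\tilde S$ remains monic of degree $\deg P/d$ and $P = \tilde h\circ \tilde S$.

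The key step is then to identify $\tilde S$ with the approximate root. Since $\tilde h$ is monic of degree $d$ with vanishing subleading term, the difference $P - \tilde S^d = \tilde h(\tilde S) - \tilde S^d$ involves only powers $\tilde S^{\,j}$ with $j \le d-2$, hence has degree at most $(d-2)\frac{\deg P}{d} < \deg P - \frac{\deg P}{d}$. By the uniqueness in Proposition~\ref{prop:root}, this forces $\tilde S = \sqrt[d]{P} = Q$. Consequently $P = \tilde h\circ Q + 0$ is a decomposition satisfying all of conditions~(\ref{it:i})--(\ref{it:iii}) of Theorem~\ref{th:decomp}, the choice $R=0$ vacuously fulfilling~(\ref{it:iii}). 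Invoking the uniqueness part of Theorem~\ref{th:decomp}, this must coincide with the canonical decomposition, so $R=0$, as desired.

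The main obstacle I anticipate is the normalization: one has to check carefully that the two successive substitutions genuinely preserve the composite $g\circ S$ and that monicity and the hypothesis on $\charac K$ are each used in exactly the right place. Once the decomposition is brought into normal form with monic factors and vanishing subleading term, the degree estimate together with the two uniqueness statements finishes the argument almost formally.
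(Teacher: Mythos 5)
Your proof is correct and takes essentially the same route as the paper: make both factors monic, remove the coefficient of $x^{d-1}$ in the outer factor by a Tschirnhausen shift (using that $\charac K$ does not divide $d$), and conclude from the uniqueness in Theorem~\ref{th:decomp}. Your explicit identification of $\tilde S$ with $\sqrt[d]{P}$ via the degree bound is a detail the paper leaves implicit (it is the content of Corollary~\ref{cor:root}), but the argument is the same.
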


In view of Algorithm \ref{algo} we also get an algorithm to decide
whether a polynomial is decomposable or not and in the positive case give
its decomposition.   
 
\begin{proof}
If $R=0$ then $P$ is $d$-decomposable.
Conversly if $P$ is $d$-decomposable, then there exist $h,Q \in K[x]$ such that
$P = h(Q)$. As $P$ is monic we can suppose $h, Q$ monic. 
Moreover, up to a linear change of coordinates $x \mapsto x+ \alpha$,
we can suppose that $\coeff(h,x^{d-1})=0$. Therefore
$P = h(Q)$ is a decomposition that verifies the conditions of Theorem \ref{th:decomp}.
\end{proof}

\begin{remark*}
Let $P(x)=x^n+a_1x^{n-1}+\cdots+a_n$, we first consider $a_1,\ldots,a_n$
as indeterminates (i.e.~$P$ is seen as an element of $K(a_1,\ldots,a_n)[x]$).
The coefficients of $h(x), Q(x)$ and $R(x) = r_0 x^k + r_1 x^{k-1} + \cdots + r_k$ (computed by Proposition~\ref{prop:root}, the system~(\ref{eq:sys})
and Algorithm~\ref{algo}) are polynomials in the $a_i$,
in particular $r_i = r_i(a_1,\ldots,a_n) \in K[a_1,\ldots,a_n]$, $i=0,\ldots,k$.

Now we consider $a_1^*,\ldots,a_n^* \in K$ as specializations of $a_1,\ldots,a_n$
and denote by $P^*$ the specialization of $P$ at $a_1^*,\ldots,a_n^*$.
Then, by Proposition~\ref{prop:one},  $P^*$ is $d$-decomposable in $K[x]$ if and only if 
$r_i(a_1^*,\ldots,a_n^*)=0$ for all $i=0,\ldots,k$.
It expresses the set of $d$-decomposable monic polynomials of degree $n$
as an affine algebraic variety. We give explicit equations in the following example.
\end{remark*}

\begin{example*}
Let $K$ be a field of characteristic different from $2$. Let
$P(x) = x^6+a_1x^5+a_2x^4+a_3x^3+a_4x^2+ a_5x+a_6$ be a monic polynomial
of degree $6$ in $K[x]$ (the $a_i \in K$ being indeterminates).
Let $d=2$. 
We first look for the approximate $2$-root of $P(x)$.
$\sqrt[2]{P(x)} = Q(x) = x^3+b_1x^2+b_2x + b_3$.
In view of the triangular system (\ref{eq:sys})
we get
$$ b_1 = \frac{a_1}{2},\quad b_2 = \frac{a_2-b_1^2}{2}, \quad b_3 = \frac{a_3-2b_1b_2}{2}.$$
Once we have computed $Q$, we get $h(x)=x^2+a_6-b_ 3^2$.
Therefore
$$R(x)= (a_4-2b_1b_3-b_2^2)x^2+(a_5-2b_2b_3)x.$$
Now $P(x)$ is $2$-decomposable in $K[x]$
if and only if $R(x) =0$ in $K[x]$ that is to say
if and only if $(a_1,\ldots,a_6)$ satifies the polynomial system of equations
in $a_1,\ldots,a_5$:
$$
\begin{cases}
  a_4-2b_1b_3-b_2^2 = 0, \\
  a_5-2b_2b_3 = 0. \\
\end{cases}
$$
\end{example*}

\section{Decomposable polynomials in several variables}
\label{sec:multi}

Again $K$ is a field and $d \ge 2$. Set $n\ge 2$. $P \in K[x_1,\ldots,x_n]$ is said to be \defi{$d$-decomposable} in $K[x_1,\ldots,x_n]$ if there exist
$Q \in K[x_1,\ldots,x_n]$, and $h\in K[t]$  with $\deg h = d$,  such that
$$P(x_1,\ldots,x_n) = h \circ Q(x_1,\ldots,x_n).$$

\begin{proposition}
\label{prop:multi}
Let $A = K[x_2,\ldots,x_n]$, $P\in A[x_1] = K[x_1,\ldots,x_n]$ monic in $x_1$.
Fix $d$ that divides $\deg_{x_1} P$, such that $\charac K$ does not divide $d$.
$P$ is $d$-decomposable in $K[x_1,\ldots,x_n]$ if and only if the decomposition 
$P = h \circ Q +R$ of Theorem \ref{th:decomp} in $A[x_1]$
verifies $R=0$ and $h \in K[t]$ (instead of $h \in K[t,x_2,\ldots,x_n]$).
\end{proposition}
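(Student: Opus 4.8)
The plan is to prove both implications, the reverse one being immediate and the forward one resting on a normalization argument combined with the uniqueness part of Theorem~\ref{th:decomp}. For the reverse direction, if the decomposition of Theorem~\ref{th:decomp} in $A[x_1]$ satisfies $R=0$ and $h\in K[t]$, then $P=h\circ Q$ with $h\in K[t]$ of degree $d$ and $Q\in A[x_1]=K[x_1,\ldots,x_n]$, which is exactly a $d$-decomposition of $P$ over $K$; there is nothing more to check here. Note also that $d$ is invertible in $A=K[x_2,\ldots,x_n]$ (since $\charac K\nmid d$) and $d\mid \deg_{x_1}P$, so Theorem~\ref{th:decomp} does apply to $P\in A[x_1]$.

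For the forward direction, suppose $P=g\circ S$ with $g\in K[t]$, $\deg g=d$, and $S\in K[x_1,\ldots,x_n]$. The idea is to transform this given decomposition into one satisfying all three conditions of Theorem~\ref{th:decomp} while keeping the outer polynomial in $K[t]$, and then to appeal to uniqueness. First I would compare leading coefficients in $x_1$. Write $g(t)=c_dt^d+\cdots$ with $c_d\in K^\times$, let $m=\deg_{x_1}S\ge 1$, and let $\ell\in A$ denote the leading coefficient of $S$ in $x_1$. Since $\deg_{x_1}(c_jS^j)=jm<dm$ for $j<d$, the top $x_1$-term of $g(S)$ comes only from $c_dS^d$, whence $\deg_{x_1}P=dm$ and, as $P$ is monic in $x_1$, the relation $c_d\,\ell^{\,d}=1$. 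The crucial observation, which I expect to be the main obstacle, is that this forces $\ell\in K^\times$: we have $\ell^{\,d}=c_d^{-1}\in K^\times$, and in the domain $A$ the total degree is additive, so $d\cdot\deg_{\mathrm{tot}}(\ell)=\deg_{\mathrm{tot}}(\ell^{\,d})=0$ gives $\deg_{\mathrm{tot}}(\ell)=0$, i.e.\ $\ell$ is a nonzero constant. This is precisely what guarantees that the normalization below introduces no coefficients outside $K$, i.e.\ that the outer factor lands in $K[t]$ rather than merely in $A[t]$.

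Granting $\ell\in K^\times$, I would normalize in two steps, both staying inside $K$. Setting $Q_1=\ell^{-1}S$ makes $Q_1$ monic in $x_1$, and $g_1(t):=g(\ell t)\in K[t]$ is then monic of degree $d$ with $P=g_1\circ Q_1$. Next, since $d$ is invertible in $K$, the element $\gamma:=\tfrac1d\coeff(g_1,t^{d-1})\in K$ is defined; replacing $Q_1$ by $Q:=Q_1+\gamma$ and setting $h(t):=g_1(t-\gamma)\in K[t]$ yields $P=h\circ Q$ with $h$ monic of degree $d$, $\coeff(h,t^{d-1})=0$, and $Q$ monic in $x_1$ (adding a constant does not disturb the leading $x_1$-term). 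Thus $P=h\circ Q+R$ with $R=0$ meets conditions (\ref{it:i})--(\ref{it:iii}) of Theorem~\ref{th:decomp} in $A[x_1]$. By the uniqueness asserted there, this must be the decomposition of Theorem~\ref{th:decomp}; in particular that decomposition has $R=0$ and $h\in K[t]$, as claimed.
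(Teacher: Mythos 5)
Your proof is correct and takes essentially the same route as the paper: normalize the given decomposition $P = g\circ S$ so that the outer polynomial is monic with vanishing $t^{d-1}$ coefficient and $Q$ is monic in $x_1$, then conclude by the uniqueness in Theorem~\ref{th:decomp}. The only difference is that you spell out a detail the paper leaves implicit, namely that the leading $x_1$-coefficient $\ell$ of $S$ lies in $K^\times$ (via additivity of total degree in the domain $A$), which is exactly what guarantees the normalization stays over $K$.
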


\begin{proof}
If $P$ admits a decomposition as in Theorem \ref{th:decomp} with
$R=0$ and $h \in K[t]$ then $P = h \circ Q$ is $d$-decomposable.

Conversly if $P$ is $d$-decomposable in
$K[x_1,\ldots,x_n]$ then $P = h \circ Q$ with $h\in K[t]$, $Q \in K[x_1,\ldots,x_n]$.
As $P$ is monic in $x_1$ we may suppose that $h$ is monic and $Q$ is monic in $x_1$.
We can also suppose $\coeff(h,t^{d-1})=0$.
Therefore  $h$, $Q$ and $R:=0$ verify the conditions of Theorem \ref{th:decomp} in $A[x]$.
As such a decomposition is unique, it ends the proof.
\end{proof}

\begin{example*}
Set $A = K[y]$ and let $P(x) = x^6+a_1x^5+a_2x^4+a_3x^3+a_4x^2+ a_5x+a_6$ be a monic polynomial
of degree $6$ in $A[x]=K[x,y]$, with coefficients $a_i = a_i(y) \in A = K[y]$.
In the example of section \ref{sec:one} we have computed
the decomposition $P = h \circ Q + R$ for $d=2$
and set $ b_1 = \frac{a_1}{2},\quad b_2 = \frac{a_2-b_1^2}{2}, \quad b_3 = \frac{a_3-2b_1b_2}{2}.$
We found $h(t)= t^2+a_6-b_ 3^2 \in A[t]$
and $R(x) = (a_4-2b_1b_3-b_2^2)x^2+(a_5-2b_2b_3)x \in A[x].$
By Proposition~\ref{prop:multi} above, we get that 
$P$ is $2$-decomposable in $K[x,y]$ if and only
$$
\begin{cases}
  a_6-b_ 3^2 \in K, \\
  a_4-2b_1b_3-b_2^2 = 0  \quad \text{ in }K[y], \\
  a_5-2b_2b_3 = 0 \quad \text{ in }K[y]. \\
\end{cases}
$$
Each line yields a system of polynomial equations in 
the coefficients $a_{ij} \in K$ of $P(x,y)=\sum a_{ij}x^iy^j \in K[x,y]$.
In particular the set of $2$-decomposable monic polynomials of degree $6$ in $K[x,y]$ is an affine algebraic variety.
\end{example*}


\end{document}